\author{K\"{u}bra Benl\.{i}}
\author{Paul Pollack}
\address{Department of Mathematics\\ University of Georgia\\ Athens, GA 30601}
\email{kubra.benli25@uga.edu}
\email{pollack@uga.edu}
\title[Small prime $k$th power residues]{Small prime $k$th power residues for $k=2,3,4$: \\A reciprocity laws approach}
\DeclareMathAlphabet{\curly}{U}{rsfs}{m}{n}
\newtheorem{thm}{Theorem}
\newtheorem{prop}[thm]{Proposition}
\newtheorem*{thmA}{Theorem A}
\theoremstyle{remark}
\newtheorem*{rmk}{Remark}
\begin{document}
\renewcommand{\labelenumi}{(\roman{enumi})}
\renewcommand\phi\varphi
\def\A{\curly{A}}
\def\E{\curly{E}}
\def\F{\mathbb{F}}
\def\PV{\mathrm{PV}}
\def\Qq{\curly{Q}}
\def\N{\mathbb{N}}
\def\Q{\mathbb{Q}}
\def\Z{\mathbb{Z}}
\def\R{\mathbb{R}}
\def\C{\mathbb{C}}
\def\RSUM{\mathrm{RSUM}}
\def\WRSUM{\mathrm{WRSUM}}
\def\OO{\mathcal{O}^{*}}
\newcommand\qred{\mathrm{qred}}
\newcommand\SL{\mathrm{SL}}
\newcommand{\leg}[2]{\genfrac{(}{)}{}{}{#1}{#2}}
\newcommand\Hh{\mathbb{H}}
\newcommand\gcrd{\mathrm{gcrd}}
\newcommand\Mod{\mathrm{~Mod~}}

\begin{abstract} Nagell proved that for each prime $p\equiv 1\pmod{3}$, $p > 7$, there is a prime $q<2p^{1/2}$ that is a cubic residue modulo $p$. Here we show that for each fixed $\epsilon > 0$, and each prime $p\equiv 1\pmod{3}$ with $p > p_0(\epsilon)$, the number of prime cubic residues $q < p^{1/2+\epsilon}$ exceeds $p^{\epsilon/30}$. Our argument, like Nagell's, is rooted in the law of cubic reciprocity; somewhat surprisingly, character sum estimates play no role. We use the same method to establish related results about prime quadratic and biquadratic residues. For example, for all large primes $p$, there are more than $p^{1/9}$ prime quadratic residues $q<p$.
\end{abstract}
\subjclass[2010]{11A15 (primary), 11N36 (secondary)}
\maketitle
\section{Introduction}
For each prime $p$ and each integer $k\ge 2$, let $r_k(p)$ denote the smallest prime $k$th power residue modulo $p$. Clearly, any prime congruent to $1$ modulo $p$ is a $k$th power residue, and so $r_k(p)$ exists for all pairs $k, p$. Almost a full century ago, I.\,M. Vinogradov conjectured that $r_2(p) = O_{\epsilon}(p^{\epsilon})$ for each $\epsilon > 0$ \cite{vinogradov18}, and it is widely believed that the same is true for $r_k(p)$, for every fixed $k$. (The general conjecture is known under the assumption of the Generalized Riemann Hypothesis; see, e.g., a recent paper of Lamzouri, Li, and Soundararajan \cite[Theorem 1.4]{LLS15}, who present explicit upper bounds improving earlier estimates of Bach and Sorenson \cite{BS96}.) The jumping-off point for this note is an unconditional upper bound for $r_k(p)$ published by Elliott in 1974 \cite{elliott71}.
\begin{thmA} Fix an integer $k\ge 2$, and fix $\epsilon > 0$. For all large primes $p\equiv 1\pmod{k}$,
\begin{equation}\label{eq:elliottbound}r_k(p) <	 p^{\frac{k-1}{4}+\epsilon}.
\end{equation}
\end{thmA}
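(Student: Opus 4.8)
The plan is to locate a small prime in the subgroup $H$ of $k$th power residues. Since $p\equiv 1\pmod k$, there is a Dirichlet character $\chi$ modulo $p$ of exact order $k$, and for a prime $q\ne p$ one has $q\in H$ exactly when $\chi(q)=1$. The first ingredient I would invoke is Burgess's character sum bound, which for a nontrivial character modulo the prime $p$ gives cancellation in $\sum_{n\le N}\chi(n)$ as soon as $N>p^{1/4+\epsilon}$. Consequently, for each $i=1,\dots,k-1$ the sums $\sum_{n\le N}\chi^i(n)$ are $o(N)$ at $N=p^{1/4+\epsilon}$, so the values $\chi(n)$ of the integers $n\le p^{1/4+\epsilon}$ already equidistribute over all $k$th roots of unity; in particular each value $\zeta^i$ is attained by some integer below $p^{1/4+\epsilon}$. (As a sanity check on the difficulty, note that the least $k$th power \emph{non}-residue is automatically prime, since a factorization would exhibit a smaller non-residue.)

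The exponent $\tfrac{k-1}{4}$ then appears through a multiplicative construction. Because $H$ has index $k$, one can select a bounded number—at most $k-1$—of integers $n_i\ll p^{1/4+\epsilon}$ whose $\chi$-values cancel to $1$, and their product
\[
m=n_1 n_2\cdots n_r\in H,\qquad r\le k-1,\qquad m\ll p^{\frac{k-1}{4}+\epsilon},
\]
is a $k$th power residue below the target size. Thus the cost of reaching the trivial coset is the Burgess exponent $\tfrac14$ for a single small non-residue, multiplied by the combinatorial length $k-1$ of the cancelling product.

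The difficulty, and the step I expect to be the main obstacle, is that this $m$ is composite, whereas Theorem~A concerns a prime residue. For least non-residues one sidesteps this—the residues below the least non-residue are closed under multiplication, so Vinogradov's smooth-number argument applies—but $H$ enjoys no such closure, and there is no reason a factor of $m$ should again lie in $H$. To extract a genuine prime I would instead count primes in $H$ directly, writing
\[
\#\{q\le X:\ q\text{ prime},\ \chi(q)=1\}=\frac{\pi(X)}{k}+\frac1k\sum_{j=1}^{k-1}\sum_{q\le X}\chi^{j}(q),
\]
and would try to show the character sums over primes are $o(\pi(X))$ by inserting the Burgess estimate into Vaughan's identity. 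The hard part is the resulting Type~II bilinear sums: controlling them in ranges as short as $X=p^{\frac{k-1}{4}+\epsilon}$, where one must balance the two bilinear variables against the Burgess constraint $N>p^{1/4}$, is exactly the obstruction that the reciprocity method of the present paper is designed to circumvent.
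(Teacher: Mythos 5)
Your proposal does not prove Theorem A, and the gap is the one you yourself flag: nothing in it ever produces a \emph{prime} $k$th power residue. The construction in your first two paragraphs only yields a composite member of $H$ of size $p^{(k-1)/4+\epsilon}$, and that is vacuous: $n=1$ and every perfect $k$th power already lie in $H$, and Burgess equidistribution even gives $\sim N/k$ integers $n\le N=p^{1/4+\epsilon}$ with $\chi(n)=1$, so no product of $k-1$ factors is needed (nor is that bookkeeping the actual source of Elliott's exponent). All of the content of the theorem is in the primality requirement, and your proposed route to it --- bounding $\sum_{q\le X}\chi(q)$ over primes via Vaughan's identity plus Burgess at $X=p^{(k-1)/4+\epsilon}$ --- is not merely difficult but unavailable: for $k=2$ it would require cancellation in character sums over primes of length $p^{1/4+\epsilon}$, whereas nothing nontrivial is known below roughly $p^{1/2+\epsilon}$ (I.\,M. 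Vinogradov reached $p^{3/4+\epsilon}$, Karatsuba $p^{1/2+\epsilon}$). Worse, for a completely multiplicative function the Type II sums you plan to ``balance'' simply factor, $\sum_m\sum_n a_m b_n\chi(mn)=\bigl(\sum_m a_m\chi(m)\bigr)\bigl(\sum_n b_n\chi(n)\bigr)$, and with arbitrary coefficients these admit no cancellation at all (take $a_m=\overline{\chi(m)}$); so the bilinear mechanism gives nothing, and this is a wall rather than a tuning problem.

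What your sketch is missing is the second ingredient that the paper explicitly attributes to Elliott's proof (note the paper does not reprove Theorem A; it cites Elliott, and Linnik--A.\,I. Vinogradov for $k=2$): lower bounds for $|L(1,\chi^i)|$, $1\le i\le k-1$ --- Siegel's theorem for the real characters among the $\chi^i$, the classical bound $\gg 1/\log p$ for the complex ones. This is precisely the device that lets one detect a small prime residue \emph{without} any equidistribution of primes. In essence, one works with the nonnegative coefficients of $\prod_{i=0}^{k-1}L(s,\chi^i)$, the Dedekind zeta function of the degree-$k$ subfield of $\mathbb{Q}(\zeta_p)$, in which a prime splits completely exactly when it is a $k$th power residue mod $p$: Burgess controls the partial sums of each $\chi^i$, the $L(1,\chi^i)$ lower bounds keep the resulting main term from degenerating, and if no prime up to the desired bound were a residue, the integers carrying nonzero coefficients would be too sparse to support that main term; the contradiction produces a small split prime with no character sum over primes ever estimated. (The paper's remark that Elliott does better for odd $k$, because then all the nontrivial $\chi^i$ are complex and Siegel can be avoided, confirms this is where the $L$-values enter.) Finally, your closing sentence conflates two different results: the reciprocity method of this paper is not designed to prove, and does not prove, Theorem A; it proves Theorems \ref{thm:main1}--\ref{thm:main3}, which trade the exponent $\frac{k-1}{4}$ for $\frac12$ in exchange for producing many primes.
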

\noindent The restriction to primes $p\equiv 1\pmod{k}$ is a natural one, since the set of $k$th powers modulo $p$ coincides with the set of $\gcd(k,p-1)$th powers.

In nascent form, the method of proof of Theorem A goes back to Linnik and A.\,I. Vinogradov \cite{VL66}, who showed Theorem A when $k=2$. The key components are (1) Burgess's character sum bound, and (2) lower bounds on $|L(1,\chi)|$ for nonprincipal Dirichlet characters $\chi$ mod $p$ of order dividing $k$. Note that Theorem A is of interest only for fairly small values of $k$, as $\frac{k-1}{4}$ eventually exceeds the exponent in known versions of Linnik's theorem.

For odd values of $k$, Elliott observes (op. cit.) that the proof of Theorem A can be modified to give a slightly sharper upper bound on $r_k(p)$. (The improvement comes from our possessing better lower bounds on $|L(1,\chi)|$ for complex $\chi$ vis-\`a-vis real $\chi$.) As an example, he states that for primes $p\equiv 1\pmod{3}$,
\begin{equation*} r_3(p) \le c p^{\frac{1}{2}} \exp(c' \sqrt{\log p \cdot \log\log{p}}) \end{equation*}
for certain constants $c, c'>0$.  It does not seem to be widely known that Nagell published a still sharper upper bound for $r_3(p)$ already in 1952 \cite{nagell52}, namely
\begin{equation}\label{eq:nagellbound} r_3(p) < 2p^{\frac{1}{2}} \qquad\text{once $p>7$}.\end{equation}
Remarkably, Nagell's proof of \eqref{eq:nagellbound} is free of any trappings of analysis, relying instead on the algebraic theory of cubic residues developed by Gauss, Jacobi, and Eisenstein.

This note explores further consequences of Nagell's method for the distribution of prime $k$th power residues, for $k=2,3,4$.

To set the stage, observe that the $k=2$ case of Theorem A guarantees at least one prime quadratic residue below $p^{1/4+\epsilon}$. The second author showed in \cite{pollack17} that there are in fact \emph{many} prime quadratic residues below this bound: For any $\epsilon >0$ and $A>0$,
\begin{equation}\label{eq:manysmall2} \#\{\text{primes $q<p^{\frac{1}{4}+\epsilon}$: $q$ is a quadratic residue mod $p$}\} > (\log{p})^{A} \end{equation}
for all primes $p > p_0(\epsilon,A)$. Our first theorem is an analogous --- but in one sense superior --- result for prime cubic residues.

\begin{thm}\label{thm:main1} Let $\epsilon > 0$. For all primes $p \equiv 1\pmod{3}$, $p  > p_0(\epsilon)$, we have that
\[ \#\{\text{primes }q < p^{\frac{1}{2}+\epsilon}: q\text{ is a cubic residue mod $p$}\} > p^{\frac{1}{30}\epsilon}. \]
\end{thm}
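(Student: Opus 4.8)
The plan is to bypass character sums entirely by pushing everything through cubic reciprocity in the Eisenstein integers $\Z[\omega]$, where $\omega = e^{2\pi i/3}$. Since $p\equiv 1\pmod 3$ we may write $p = \pi\bar\pi$ with $\pi$ a primary prime of $\Z[\omega]$. The first step is to record a clean reciprocity criterion for the primes we will actually produce, namely split primes $q\equiv 1\pmod 3$, say $q=\lambda\bar\lambda$ with $\lambda$ primary. Writing $\leg{\cdot}{\cdot}_3$ for the cubic residue symbol, one has $\leg{q}{\pi}_3=\leg{\lambda}{\pi}_3\leg{\bar\lambda}{\pi}_3$; applying cubic reciprocity to each factor (legitimate after fixing primary representatives and absorbing units) together with the conjugation identity $\leg{\pi}{\bar\lambda}_3=\leg{\bar\pi}{\lambda}_3^{-1}$ gives
\[ \leg{q}{\pi}_3=\leg{\pi}{\lambda}_3\leg{\bar\pi}{\lambda}_3^{-1}=\leg{\pi/\bar\pi}{\lambda}_3. \]
Thus $q$ is a cubic residue modulo $p$ if and only if the \emph{fixed} element $\pi/\bar\pi$ is a cube modulo $\lambda$. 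I emphasize this because subtracting a plain cube $\pi-\delta^3$ only controls $\leg{\pi}{\lambda}_3$ and leaves a spurious factor $\leg{\bar\pi}{\lambda}_3^{-1}$; it is precisely the ratio $\pi/\bar\pi$ that must be forced to be a cube.

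Accordingly, I would build witnesses as follows. For each Eisenstein integer $\delta$ with $|\delta|\le D:=p^{\epsilon/3}$, set $\beta_\delta:=\pi-\bar\pi\delta^3$. If a split primary prime $\lambda$ divides $\beta_\delta$ (with $N(\lambda)\ne p$), then $\pi\equiv\bar\pi\delta^3\pmod\lambda$, so $\pi/\bar\pi\equiv\delta^3$ is a cube modulo $\lambda$, and the criterion above shows that $q=N(\lambda)$ is a cubic residue modulo $p$. Since $|\bar\pi|=p^{1/2}$, we have $N(\beta_\delta)\le\bigl(p^{1/2}+p^{1/2}|\delta|^3\bigr)^2\ll p^{1+2\epsilon}$, so $\beta_\delta$ has a prime factor of norm at most $\sqrt{N(\beta_\delta)}\le p^{1/2+\epsilon}$; discarding the $\delta$ whose least such factor happens to be inert, each remaining $\delta$ yields a genuine split prime $q<p^{1/2+\epsilon}$ that is a cubic residue modulo $p$.

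Next I would count distinct primes as $\delta$ ranges over the $\asymp D^2=p^{2\epsilon/3}$ admissible values, retaining only factors with $N(\lambda)\in[Q,p^{1/2+\epsilon}]$ where $Q:=D^2$. The crucial feature is a multiplicity bound requiring no equidistribution: for a fixed split $\lambda$, every $\delta$ producing it satisfies $\delta^3\equiv\pi/\bar\pi\pmod\lambda$, an equation with at most three solutions modulo $\lambda$; hence the number of such $\delta$ in the disk $|\delta|\le D$ is $\le 3\bigl(cD^2/q+O(1)\bigr)$, which is $O(1)$ once $q\ge Q=D^2$. Therefore the number of distinct cubic-residue primes obtained is $\gg$ the number of \emph{good} $\delta$, meaning those for which $\beta_\delta$ has a split prime factor in the window $[Q,p^{1/2+\epsilon}]$.

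The whole problem thus reduces to showing that a power-of-$p$ number of $\delta$ are good, and this is the step I expect to be the main obstacle. Proving that, for most $\delta$, the integer $N(\beta_\delta)$ has a prime factor in $[Q,p^{1/2+\epsilon}]$ is an elementary-sieve statement about the factorizations of the special values of a fixed norm form, and it is exactly here that one must avoid character sums. One has to rule out that $\beta_\delta$ is typically $Q$-smooth (rare, since the smoothness parameter $\log N(\beta_\delta)/\log Q\asymp 1/\epsilon$ is large) or of the shape (small smooth)$\times$(one prime exceeding $p^{1/2+\epsilon}$), and to separate the split prime factors from the inert ones. Quantifying these sieve losses — rather than the clean reciprocity input of the first paragraph — is what ultimately limits the count, and balancing $D$ and the window against them is what yields the exponent $\tfrac{1}{30}\epsilon$ in the theorem.
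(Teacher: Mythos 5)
Your reciprocity reduction is sound --- it is essentially the paper's Proposition~\ref{prop:ZH} (Sun's criterion) in Eisenstein-integer clothing --- but there is a genuine gap exactly at the step you defer, and in your formulation that step is not the routine ``elementary-sieve statement'' you hope for. The problem is the size of your auxiliary values. Since $|\pi|=|\bar\pi|=p^{1/2}$, one has $N(\beta_\delta)\ge p\bigl(|\delta|^3-1\bigr)^2$, so for every non-unit $\delta$ the norm exceeds $p$, and for typical $\delta$ it is $\approx p^{1+2\epsilon}$, i.e.\ about the \emph{square} of your target bound $p^{1/2+\epsilon}$. Consequently nothing forces $\beta_\delta$ to have any prime factor of norm below $p^{1/2+\epsilon}$: your claim in the second paragraph that $\beta_\delta$ ``has a prime factor of norm at most $\sqrt{N(\beta_\delta)}$'' is false whenever $\beta_\delta$ is a unit times a prime, and prime values of norm $\approx p^{1+2\epsilon}$ are precisely what must be excluded. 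A lower-bound (fundamental-lemma) sieve cannot exclude them: its survivors are by definition the $\delta$ for which $\beta_\delta$ has no small prime factor, and prime values are the most robust survivors of all, so a priori the entire sifted set could consist of them --- the fundamental-lemma lower bound $\gg D^2/(\log p)^{O(1)}$ has the same shape as the sieve upper bound for the number of prime values, and deciding between them is a delicate constant-versus-constant comparison your sketch does not attempt. (You would also need a separate upper-bound estimate for the ``smooth times one huge prime'' values you mention.) A further technical obstruction: both sieves are range-limited to a fixed root of the number of sifting variables, so you cannot sift up to your window floor $Q=D^2$ at all; the DHR sifting limit caps you near $D^{2/7}$ in this dimension.

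The paper evades all of this with one structural choice your plan misses: it uses the \emph{real} form of the reciprocity criterion. Writing $4p=L^2+27M^2$, Sun's criterion states that $q$ is a cubic residue mod $p$ if and only if $L/3M\equiv (x^3-9x)/(3(x^2-1))\pmod q$ is solvable; cross-multiplying \emph{here} gives the integer polynomial $f_0(x)=M(x^3-9x)+L(x^2-1)$, whose coefficients are $\asymp p^{1/2}$ and whose values at $x\le p^{\epsilon/4}$ are $\ll p^{1/2+3\epsilon/4}<p^{1/2+\epsilon}$ --- \emph{below} the target bound. (Cross-multiplying in $\Z[\omega]$, as you do, squares the $p^{1/2}$; that is exactly what creates your difficulty.) Since every prime factor of $f_0(n)$ is then automatically small enough, no ``prime factor in a window'' statement is needed: a lower-bound sieve clearing out primes below $p^{\epsilon/28}$ (comfortably within the sieve limit, since $p^{\epsilon/28}=(p^{\epsilon/4})^{1/7}$), followed by a double-counting argument, already produces more than $p^{\epsilon/29}$ distinct prime divisors, each a cubic residue by the criterion. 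Note also that the paper never needs your $O(1)$-multiplicity requirement: it tolerates each prime dividing $f(n)$ for as many as $\approx p^{3\epsilon/14}$ values of $n$, which is why sifting only to $p^{\epsilon/28}$ suffices and why the final exponent is $\epsilon/30$ rather than something tied to $Q=D^2$.
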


\noindent This surpasses \eqref{eq:manysmall2} in that the number of power residues produced exceeds a certain power of $p$, not merely an arbitrary power of $\log{p}$. By contrast, the analytic method of \cite{pollack17} when applied to this problem gives only a weaker lower bound of
\[ p^{c \log\log\log p/\log\log p} \] for some absolute positive constant $c$.

Our second theorem concerns biquadratic (i.e., fourth power) residues. For an odd prime $q$, let $q^{\ast} = (-1)^{(q-1)/2} q$, so that $q^{\ast} = \pm q$ and $q^{\ast} \equiv 1\pmod{4}$.

\begin{thm}\label{thm:main2}  Let $\epsilon > 0$. For all primes $p\equiv 1\pmod{4}$, $p > p_0(\epsilon)$, we have that
\[ \#\{\text{primes }q < p^{\frac{1}{2}+\epsilon}: \text{$q^{\ast}$ is a biquadratic residue modulo $p$}\} > p^{\frac{1}{50}\epsilon}. \]
\end{thm}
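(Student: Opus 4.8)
The plan is to imitate the proof of Theorem~\ref{thm:main1}, with biquadratic reciprocity in the Gaussian integers $\Z[i]$ playing the role that cubic reciprocity in $\Z[\omega]$ plays there. Since $p\equiv1\pmod4$ we factor $p=\pi\bar\pi$ with $\pi$ primary in $\Z[i]$; then $\F_p\cong\Z[i]/(\pi)$, and a rational integer $a$ prime to $p$ is a biquadratic residue modulo $p$ exactly when $a$ is a fourth power in $(\Z[i]/(\pi))^{\ast}$, i.e.\ when the biquadratic residue symbol $\left[\tfrac{a}{\pi}\right]_4$ is trivial. The whole scheme rests on one reciprocity reduction, which I expect to be the decisive step: for primes $q$ confined to a suitable residue class, the statement ``$q^{\ast}$ is a biquadratic residue modulo $p$'' should be translatable into a congruence condition on $q$ modulo the coordinates of $\pi$ (equivalently, on $\pi$ modulo $q$). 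The switch to $q^{\ast}$ is exactly what is needed to make this translation clean, because $q^{\ast}\equiv1\pmod4$ is primary (for $q\equiv3\pmod4$), which neutralizes the unit and sign factors that infest the biquadratic reciprocity law. This has no analogue in the quadratic setting and is where the order-four symbol is substantially more delicate than the cubic symbol of Theorem~\ref{thm:main1}.

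Granting such a reduction, I would produce biquadratic residues exactly as Nagell produces his small cubic residue: by exhibiting a family of integers, all of size at most $p^{1/2+\epsilon}$ and built from the coordinates of the primary factor $\pi$ of $p$ (for instance, small values of the relevant norm form, or of linear forms in $\operatorname{Re}\pi$ and $\operatorname{Im}\pi$), each of whose prime factors the reciprocity reduction forces to satisfy the biquadratic-residue condition. The essential point, and the reason these residues appear already near $p^{1/2}$ rather than merely below $p$, is that $\pi$ has absolute value $p^{1/2}$, so genuinely small witnesses of the congruence condition exist at the $\sqrt p$ scale; this is the same mechanism that yields Nagell's bound $r_3(p)<2p^{1/2}$. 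Because every member of the family is at most $p^{1/2+\epsilon}$, all of its prime factors automatically fall below $p^{1/2+\epsilon}$, so no large prime factors need be discarded and the count that follows stays entirely within the range required by the theorem.

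The final step is the elementary count of distinct prime factors carried out for Theorem~\ref{thm:main1}. Forming the product $\Pi$ of the family members, one bounds $\log\Pi$ from below by (number of members)$\,\times\log p$, while bounding the multiplicity of each prime $q$ from above by $O((\text{number of members})/q+\log p)$, using that the governing congruence has boundedly many roots modulo $q$. Peeling off the contribution of the very small primes leaves a positive proportion of $\log\Pi$ on primes $q$ below $p^{1/2+\epsilon}$ in the admissible class; dividing this residual logarithm by the largest possible per-prime multiplicity bounds the number of such primes below by a power of $p$, which is precisely the strengthening over $\log$-power bounds that the theorem advertises. I anticipate that the serious work lies wholly in the reciprocity reduction of the first paragraph, the count being robust; the extra loss coming from the restriction to a single residue class of $q$ and from the four-to-one character of fourth powers is what separates the exponent $\tfrac1{50}$ here from the $\tfrac1{30}$ of Theorem~\ref{thm:main1}, and tuning the family and the small-prime cutoff should reproduce this constant.
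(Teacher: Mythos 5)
The architecture you describe (reciprocity translation $\to$ family of small integers $\to$ count of distinct prime factors) is the paper's, but the two load-bearing steps are both missing or wrong as instantiated. First, the reciprocity reduction you defer is not a technicality to be granted: it is the whole proof, and the concrete families you offer in its place fail. The paper's input is Sun's corollary of biquadratic reciprocity (Proposition \ref{prop:ZH2}): writing $p=L^2+4M^2$, an odd prime $q\ne p$ has $q^{\ast}$ a biquadratic residue modulo $p$ if and only if the congruence $\frac{L}{2M}\equiv \frac{x^4-6x^2+1}{4(x^3-x)}\pmod{q}$ is solvable. Consequently the integers ``each of whose prime factors is forced to be a witness'' are precisely the values of the quartic $f_0(x)=M(x^4-6x^2+1)+2L(x^3-x)$: if $q\mid f_0(m)$ then $x=-m$ solves the congruence (the rational function is odd). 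A linear form $aL+bM$, by contrast, only pins $L/2M$ to the single class $-b/2a \pmod q$, and that class is a witness for roughly one quarter of its possible values, not automatically; even $q\mid L$ forces the condition only when $2$ is a square modulo $q$. The one automatic case is $q\mid M$ (Sun's $x=0$), which is exactly Nagell's device and produces only $O(\log p)$ primes --- hopelessly short of $p^{\epsilon/50}$. So without identifying the quartic $f_0$, which \emph{is} the content of the reduction you leave unproved, the plan does not produce a power of $p$. (Also, contrary to your expectation, no residue-class restriction on $q$ is needed: Sun's criterion holds for every odd $q\ne p$, with only the $O(\log p)$ primes dividing $2LM$ excluded.)

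Second, your counting step is not ``the count carried out for Theorem \ref{thm:main1},'' and it has a gap of its own. The paper never forms a product: it first applies the Diamond--Halberstam--Richert sieve to the values $f(n)$, $n\le p^{\epsilon/5}$, to obtain $\gg_\epsilon p^{\epsilon/5}/(\log p)^{O(1)}$ values with \emph{no} prime factor below $p^{\epsilon/46}$, and then double counts pairs $(n,q)$, so prime multiplicities never appear. Your product argument requires $v_q\bigl(\prod_{n\le N} f(n)\bigr)=O(N/q+\log p)$, and this does not follow from ``boundedly many roots modulo $q$'': one needs root counts modulo prime powers $q^j$, hence Hensel's lemma, hence nonvanishing of $\mathrm{disc}(f_0)$ and a separate treatment of the primes dividing that discriminant (an integer of size a power of $p$). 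With only the trivial bound $v_q(f(n))\le \log f(n)/\log q$, the primes below your cutoff can a priori carry all of $\log \Pi$ and the peeling step fails. This is repairable with standard lemmas, but it is a genuine missing step, not a routine one. Finally, your diagnosis of the exponent is off: the gap between $\tfrac1{50}$ here and $\tfrac1{30}$ in Theorem \ref{thm:main1} has nothing to do with restricting $q$ to a residue class or a ``four-to-one'' loss from fourth powers; it reflects the sieve dimension --- $f_0$ is now a quartic with up to four roots modulo $q$, so the sifting limit $\beta_4<9.1$ replaces $\beta_3<7$, giving $p^{\epsilon/46}$ primes, reduced to $p^{\epsilon/50}$ after discarding the $O(\log p)$ primes dividing $2LM$.
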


\noindent If $p\equiv 1\pmod{8}$, then $-1$ is a biquadratic residue modulo $p$. Consequently, $q$ and $q^{\ast}$ are either both biquadratic residues or both biquadratic nonresidues. So in this case, Theorem \ref{thm:main2} implies a power-of-$p$ lower bound on the number of prime biquadratic residues $q < p^{1/2+\epsilon}$. In comparison, Theorem A only guarantees a single prime biquadratic residue below the significantly larger value $p^{3/4+\epsilon}$. (However, the bound of Theorem A applies also when $p\equiv 5\pmod{8}$.)

As our last application, we revisit the problem of showing that there are many small prime quadratic residues modulo an odd prime $p$. In \cite{pollack17}, ``small'' was taken to mean ``not much larger than $p^{1/4}$''. Here we show that for the more relaxed problem where $p^{1/4}$ is replaced by $p^{1/2}$ we can once again establish a power-of-$p$ lower bound.

\begin{thm}\label{thm:main3} Suppose that $0 < \epsilon \le \frac12$. For all primes $p > p_0(\epsilon)$, we have that
\[ \#\{\text{primes }q < p^{\frac{1}{2}+\epsilon}: \text{$q$ is a quadratic residue modulo $p$}\} > p^{\frac{1}{25}\epsilon}. \]
\end{thm}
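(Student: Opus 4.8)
The plan is to turn Nagell's reciprocity idea around: instead of hunting for a single small prime residue, I will manufacture an entire family of integers below $p^{1/2+\epsilon}$ \emph{all of whose prime factors are forced to be quadratic residues}, and then argue that such a family cannot be assembled out of too few primes.

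\emph{The reciprocity reduction.} Set $a_0=\lfloor\sqrt p\rfloor$ and, for $1\le j\le J:=\lfloor\tfrac14 p^{\epsilon}\rfloor$, put $M_j=(a_0+j)^2-p$. Using $\epsilon\le\tfrac12$ one checks that $0<M_j<p^{1/2+\epsilon}$ for all such $j$ and all large $p$, so every prime factor of every $M_j$ lies below $p^{1/2+\epsilon}$. If $q$ is an odd prime with $q\mid M_j$, then $q\ne p$ and $q\nmid(a_0+j)$ (else $q\mid p$), so $p\equiv(a_0+j)^2\pmod q$ and hence $\leg{p}{q}=1$. When $p\equiv1\pmod4$, quadratic reciprocity gives $\leg{q}{p}=\leg{p}{q}=1$, so \emph{every} odd prime factor of \emph{every} $M_j$ is a quadratic residue mod $p$. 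The problem thus reduces to showing that $\prod_{j\le J}M_j$ has at least $p^{\epsilon/25}$ distinct odd prime factors.

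\emph{The counting engine.} I would first estimate $\sum_{j\le J}\log M_j$ from below; since $M_j\asymp j\sqrt p$ for $j\ge 2$, this sum is $\ge(\tfrac12+\epsilon-o(1))J\log p$. Writing each $M_j$ as its $J$-smooth part $s_j$ times a complementary ``rough'' part, I bound the smooth mass $\sum_j\log s_j$ by observing that for a fixed prime $q$ the congruence $(a_0+j)^2\equiv p\pmod{q^k}$ has at most two solutions per period, whence $\sum_j v_q(M_j)\ll J/q+\log p/\log q$; summing against $\log q$ over $q\le J$ and applying Mertens gives $\sum_j\log s_j\le(2\epsilon+o(1))J\log p$. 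The surviving rough mass is therefore at least $(\tfrac12-\epsilon-o(1))J\log p$, which is positive \emph{precisely because} $\epsilon<\tfrac12$ --- this is exactly where the hypothesis on $\epsilon$ enters, and the factor $2\epsilon$ (rather than $\epsilon$) is the honest price of not knowing how the residues distribute. Finally, each prime $q>J$ divides at most two of the $M_j$, so the number of \emph{distinct} prime factors exceeding $J$ is at least a constant times (rough mass)$/\log(p^{1/2+\epsilon})\gg_\epsilon J\gg p^{\epsilon}$. Tracking the constants and discarding the single possible factor of $2$, this comfortably beats the target $p^{\epsilon/25}$.

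\emph{The main obstacle} is the case $p\equiv3\pmod4$, which is genuinely different: now reciprocity yields $\leg{q}{p}=\leg{-1}{q}\leg{p}{q}$, so a prime factor $q$ of $M_j$ is a residue only when $q\equiv1\pmod4$, while those $q\equiv3\pmod4$ are nonresidues. (The statement that survives verbatim for all $p$ is that $q^{\ast}$ is a residue, mirroring Theorem~\ref{thm:main2}.) To recover $q$ itself I must show that a positive proportion of the rough mass sits on primes $\equiv1\pmod4$, and a character-sum-free argument cannot simply invoke equidistribution of residues into the two classes mod $4$. The route I would pursue exploits that each odd $M_j$ is automatically $\equiv1\pmod4$, hence has an even number of prime factors $\equiv3\pmod4$, and couples this with an elementary sieve upper bound showing that for all but $o(J)$ indices $j$ the number $M_j$ possesses a prime factor $\equiv1\pmod4$ exceeding $J$. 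Since any such prime divides at most two of the $M_j$, this again produces $\gg J$ distinct prime residues and delivers the exponent $\tfrac1{25}$ in this case as well. Making this last sieve step fully elementary, in the spirit of the paper, is where I expect the real work to lie.
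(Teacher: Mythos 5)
Your treatment of the case $p\equiv 1\pmod 4$ is essentially correct: the reciprocity reduction is exactly the one the paper uses, and your smooth/rough mass double count (bounding $\sum_j \log s_j$ via the two-roots-per-modulus observation and Mertens, then noting each prime $q>J$ divides at most two of the $M_j$) is a valid and genuinely different counting engine from the paper's, which instead applies the Diamond--Halberstam--Richert sieve to $f(x)=(x+\lfloor\sqrt p\rfloor)^2-p$. Your version is more elementary; its only blemish is the endpoint $\epsilon=\tfrac12$, where your constant $\tfrac12-\epsilon$ vanishes, but that is repaired trivially by running the argument with a smaller fixed $\epsilon'$ and using monotonicity of the count.

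The genuine gap is the case $p\equiv 3\pmod 4$, and the route you sketch there cannot be completed. To show that many $M_j$ have a prime factor $q>J$ with $q\equiv 1\pmod 4$, any sieve (half-dimensional or otherwise) must be fed a lower bound on the density of primes $q\le p^{O(\epsilon)}$ satisfying both $q\equiv 1\pmod 4$ and $\leg{p}{q}=1$; by reciprocity this is a statement about primes in prescribed residue classes modulo $4p$ in a range far below $p^{1/4}$, which is exactly the kind of unproved assertion (Vinogradov's conjecture territory) that the whole paper is engineered to avoid. A win-win argument does not close the loop: if such primes are scarce, the sieve's bound on your bad set becomes trivial, which is the wrong direction. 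Nor does the parity observation rescue you: an $M_j$ equal to a $J$-smooth number times $q_1q_2$ with $q_1,q_2\equiv 3\pmod 4$ both large is $\equiv 1\pmod 4$, has an even number of prime factors $\equiv 3\pmod 4$, and has no large prime factor $\equiv 1\pmod 4$; ruling out such configurations is precisely the parity problem of sieve theory, and no upper-bound sieve can do it. (Even heuristically, only a positive proportion --- not $1-o(1)$ --- of indices $j$ should survive, since $\log J\asymp\log M_j$.) The paper sidesteps all of this with a different device: it replaces $(x+a_0)^2-p$ by a reduced positive definite binary quadratic form $ax^2+bxy+cy^2$ of discriminant $-p$. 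Because $-p\equiv 1\pmod 4$, every odd prime $q$ dividing a value of such a form satisfies $\leg{-p}{q}=1$ and hence $\leg{q}{p}=1$, with no condition on $q\bmod 4$ whatsoever. The cost is that one needs such a form with $a>p^{1/2-\epsilon/2}$, so that $c\le p/(3a)<p^{1/2+\epsilon/2}$ and the form's values stay below $p^{1/2+\epsilon}$; the existence of such a form comes from a divisor-counting argument combined with Siegel's bound $h(-p)>p^{1/2-\epsilon/3}$, which is why the theorem is ineffective in this case. Without an input of this kind --- some mechanism forcing the residue condition independently of $q\bmod 4$ --- your proposal proves the theorem only for half of all primes.
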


As was the case for Theorem \ref{thm:main1}, the proofs of Theorems \ref{thm:main2} and \ref{thm:main3} are character-free. It would be interesting to investigate the possibility of  obtaining stronger results by injecting character sum estimates into the method.

\section{Many small prime cubic residues: Proof of Theorem \ref{thm:main1}}

The following consequence of the law of cubic reciprocity is due to Z.-H. Sun (see \cite[(1.6) and Corollary 2.1]{sun98}). Recall that for each prime $p\equiv 1\pmod{3}$, there are integers $L,M$, uniquely determined up to sign, with
\begin{equation}\label{eq:prep} 4p = L^2 + 27M^2. \end{equation}

\begin{prop}\label{prop:ZH} Let $p$ be a prime with $p \equiv 1\pmod{3}$, and let $L, M$ be integers satisfying $4p = L^2+27M^2$. Let $q$ be a prime, $q \ne 2, 3$, or $p$.  Then
\[ q \text{ is a cubic residue mod $p$} \Longleftrightarrow \frac{L}{3M}\equiv \frac{x^3-9x}{3(x^2-1)} \pmod{q} \quad\text{for some $x\in \Z$}.\]
In the case when one of the right-hand denominators is a multiple of $q$, the congruence is considered to hold when both denominators are multiples of $q$. In particular, if $q\mid M$, then $q$ is a cubic residue modulo $p$ {\rm (}take $x=1${\rm )}.
\end{prop}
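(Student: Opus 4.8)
The plan is to derive Proposition~\ref{prop:ZH} from the law of cubic reciprocity in the ring of Eisenstein integers $\Z[\omega]$, $\omega = e^{2\pi i/3}$. Since $p\equiv1\pmod{3}$ splits as $p=\pi\bar\pi$, reduction modulo a prime $\pi\mid p$ identifies $\Z[\omega]/\pi$ with $\F_p$, and $q$ is a cubic residue mod $p$ exactly when the cubic residue symbol $\left(\frac{q}{\pi}\right)_3$ equals $1$ (this is independent of the choice of $\pi$ versus $\bar\pi$, and of the generator, since $q$ is rational). Using $\sqrt{-3}=2\omega+1$ one checks that $\pi_0:=\frac{L+3M\sqrt{-3}}{2}$ has norm $\frac{L^2+27M^2}{4}=p$; as the stated criterion is invariant under $L\mapsto-L$ and $M\mapsto-M$ (the right-hand function is odd in $x$), I may normalize the sign of $L$ so that $L\equiv1\pmod{3}$, which is precisely the condition making $\pi_0$ primary. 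The exclusions $q\neq2,3$ guarantee that $2,3$ are invertible modulo $q$, so that the reductions below are meaningful.

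Next I would apply cubic reciprocity to re-express $\left(\frac{q}{\pi_0}\right)_3$ through $\pi_0\bmod q$. When $q\equiv1\pmod{3}$, $q=\lambda\bar\lambda$ splits, and reciprocity gives $\left(\frac{q}{\pi_0}\right)_3=\left(\frac{\pi_0}{\lambda}\right)_3\left(\frac{\pi_0}{\bar\lambda}\right)_3$; comparing the two reductions $\Z[\omega]/\lambda,\Z[\omega]/\bar\lambda\cong\F_q$ (which differ by $\omega\mapsto\omega_0$ versus $\omega\mapsto\omega_0^{2}$) collapses this product to the condition $\chi(a)=\chi(\bar a)$, where $\chi$ is the cubic character of $\F_q$, $s_0\in\F_q$ is a square root of $-3$, and $a=\frac{L+3Ms_0}{2}$, $\bar a=\frac{L-3Ms_0}{2}$. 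When $q\equiv2\pmod{3}$, $q$ is inert, and reciprocity together with $\pi_0^{q}\equiv\bar\pi_0\pmod{q}$ identifies $\left(\frac{q}{\pi_0}\right)_3$ with $(\bar\pi_0/\pi_0)^{(q+1)/3}\bmod q$ in $\F_{q^2}$. In either case the upshot is one uniform criterion: writing $t=\frac{L}{3M}$ and $v=\frac{t+s_0}{t-s_0}=a/\bar a$, the prime $q$ is a cubic residue mod $p$ if and only if $v$ is a cube, inside $\F_q$ when $q\equiv1$, and inside the norm-$1$ subgroup of $\F_{q^2}^{\times}$ when $q\equiv2$.

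The conceptual core is then a single algebraic identity that converts ``$v$ is a cube'' into the rational-function condition. Expanding $(x\pm\sqrt{-3})^3=(x^3-9x)\pm3\sqrt{-3}\,(x^2-1)$ shows that, with $w=\frac{x+\sqrt{-3}}{x-\sqrt{-3}}$ and $f(x)=\frac{x^3-9x}{3(x^2-1)}$,
\[ w^3=\frac{f(x)+\sqrt{-3}}{f(x)-\sqrt{-3}}. \]
Thus the M\"obius map $\mu(z)=\frac{z+\sqrt{-3}}{z-\sqrt{-3}}$ conjugates $f$ into cubing, $\mu\circ f=(\,\cdot\,)^{3}\circ\mu$, so $\{f(x)\}=\mu^{-1}(\{\text{cubes}\})$ is exactly the set of $t$ for which $v=\mu(t)$ is a cube. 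Since $\mu$ sends $\mathbb{P}^1(\F_q)$ bijectively onto $\mathbb{P}^1(\F_q)$ when $s_0\in\F_q$, and onto the norm-$1$ subgroup of $\F_{q^2}^{\times}$ when $s_0\notin\F_q$, chaining this with the criterion of the previous paragraph yields the asserted equivalence. The degenerate cases --- vanishing denominators, and $q\mid M$ (where $t=\infty$, $v=1$, and $x=1$ works) --- are read off directly.

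The main obstacle is the bookkeeping inside the reciprocity step. One must pin down the primary normalization so that no stray unit factor survives: a short computation shows that replacing $\pi_0$ by an associate $\omega^{j}\pi_0$ shifts the collapsed condition from $\chi(a/\bar a)=1$ to $\chi(a/\bar a)=\chi(\omega_0)^{j}$, so the clean form is available only because $L\equiv1\pmod{3}$ forces $j=0$. A secondary subtlety lies in the inert case $q\equiv2\pmod{3}$: there one must check, by a descent in the cyclic group $\F_{q^2}^{\times}$ exploiting $3\mid q+1$, that a norm-$1$ element is a cube in $\F_{q^2}$ precisely when it is the cube of another norm-$1$ element, i.e.\ of some $\mu(x)$ with $x\in\F_q$, so that the criterion really does match the split case.
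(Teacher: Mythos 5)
The paper itself contains no proof of Proposition~\ref{prop:ZH}: it quotes the statement from Sun \cite{sun98}, recording only that it is a consequence of the law of cubic reciprocity. Your blind derivation supplies exactly such a consequence, and its outline is correct, so the comparison is between your self-contained argument and a citation. The three load-bearing steps all check out. First, the normalization is legitimate: $3\nmid L$ (else $9\mid 4p$), the criterion is invariant under $L/3M\mapsto -L/3M$ since $f(x)=\frac{x^3-9x}{3(x^2-1)}$ is odd, and $L\equiv 1\pmod 3$ does make $\pi_0=\frac{L+3M}{2}+3M\omega$ primary. Second, the reciprocity bookkeeping collapses as you claim: in the split case, $\overline{\leg{\alpha}{\lambda}_3}=\leg{\bar\alpha}{\bar\lambda}_3$ turns $\leg{\pi_0}{\lambda}_3\leg{\pi_0}{\bar\lambda}_3=1$ into ``$a/\bar a$ is a cube in $\F_q^{\times}$,'' while in the inert case $\leg{\pi_0}{q}_3\equiv(\pi_0^{\,q-1})^{(q+1)/3}\equiv(\bar\pi_0/\pi_0)^{(q+1)/3}\pmod q$; the descent you flag (a norm-one element of $\F_{q^2}^{\times}$ is a cube there iff it is the cube of a norm-one element) follows from $\gcd(q-1,3)=1$ in the cyclic group $\F_{q^2}^{\times}$. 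Third, the conjugation identity $(x\pm\sqrt{-3})^3=(x^3-9x)\pm 3\sqrt{-3}(x^2-1)$ does convert ``$v$ is a cube'' into the stated congruence, and since the fibre of $f$ over $\infty$ is $\{1,-1,\infty\}$, every solvable case is witnessed by an integer $x$ (never only by $x=\infty$), which matches the proposition's convention on vanishing denominators and the parenthetical ``take $x=1$'' when $q\mid M$. What your route buys is transparency and self-containment --- the rational function is revealed as the cubing map conjugated by $z\mapsto(z+\sqrt{-3})/(z-\sqrt{-3})$; what Sun's treatment buys, and why the paper defers to it, is greater generality (including $q=2,3$, as the paper's subsequent Remark notes) with the same bookkeeping carried out once and for all.
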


\begin{rmk} Taking $x=0$ and $x=1$, we deduce from Proposition \ref{prop:ZH} that if a prime $q\ne 2,3$ divides $LM$, then $q$ is a cubic residue modulo $p$. In fact, the restriction to $q\ne 2,3$ is unnecessary. (See \cite[Chapter 7]{lemmermeyer00} or \cite[Chapter 2]{pollack09}  for details and background). When $p>7$, \eqref{eq:prep} implies that $|LM| > 1$. Now taking any prime $q$ dividing $LM$ produces a cubic residue with $q < 2p^{1/2}$. This was essentially Nagell's proof of \eqref{eq:nagellbound}.\end{rmk}

\begin{proof}[Proof of Theorem \ref{thm:main1}] Let $p$ be a large prime with $p\equiv 1\pmod{3}$, and write $4p = L^2+27M^2$ with $L, M > 0$. Let
\[ f_0(x) := M(x^3-9x) + L(x^2-1) \in \Z[x]. \]

As preparation for sieving the values of $f_0$, we record some observations on the number of roots of $f_0$ modulo primes $q$. Modulo $q=2$, there is always at least one root, since $f_0(1)=-8M$, and there are two roots whenever $L$ is even, since $f_0(0)=-L$. Modulo $q=3$, the polynomial $f_0$ has at most two roots, since $3\nmid f_0(0)$ (for if $3\mid L$, then $3^2 \mid L^2+27M^2=4p$). Now suppose that $q>3$. Since  $\gcd(L,M)^2 \mid L^2+27M^2 = 4p$, it must be that
\begin{equation}\label{eq:gcdLM}\gcd(L,M)=1\text{ or } 2.\end{equation}
Since $f_0$ has leading coefficient $M$ and constant term $-L$, \eqref{eq:gcdLM} implies that $f_0$ does not reduce to the zero polynomial mod $q$, and so $f_0$ has at most three roots modulo $q$. Collecting the results of this paragraph, we see in particular that $f_0$ has no fixed prime divisor except possibly $q=2$.

We sidestep the case when $f_0$ has $2$ as a fixed prime divisor by means of the following device. From \eqref{eq:gcdLM},  $2^5 \nmid \gcd(L,8M)$; hence, we may choose $n_0 \in \{0,1\}$ with $2^5 \nmid f_0(n_0)$. Let $e$ be the largest integer for which $2^e \mid f_0(n_0)$, so that $e \in \{0,1,2,3,4\}$. Put
\[ f(x) = \frac{1}{2^e} f_0(2^5 x + n_0). \]
Then $f(x) \in \Z[x]$ and all the values of $f$ at integer inputs are odd. Since $2^5$ is invertible modulo every odd prime $q$, the above results concerning $f_0$ imply that $f$ has at most two roots modulo $q=3$ and at most three roots modulo each prime $q>3$.

Now let
\[ \A= \{f(n): n\le p^{\epsilon/4}\}. \]
(Here and below, $n$ is understood to run only over positive integers.)
Since $f$ has no fixed prime divisors and at most three roots modulo every prime $q>3$, the fundamental lemma of the sieve shows that there is an absolute constant $\eta > 0$ such that
\[ \#\{n \le p^{\epsilon/4}, f(n)\text{ has no prime divisor less than $p^{\eta\epsilon/4}$}\} \gg_{\epsilon} p^{\epsilon/4}/(\log{p})^3 \]
provided only that $p$ is sufficiently large in terms of $\epsilon$. In fact, by the sieve of Diamond--Halberstam--Richert, we can (and will) take $\eta=1/7$. (The sieve we use is Theorem 9.1 on p. 104 of \cite{DH08}. The relevant numerological fact is that the sifting limit, $\beta_3$, is smaller than $7$; see Table 17.1, p. 227.) Put
\[ \E = \{n \le p^{\epsilon/4}: f(n)\text{ has no prime divisor less than $p^{\epsilon/28}$}\}, \]
and let
\[ \Qq = \{\text{primes $q$}: q\mid f(n)\text{ for some $n \in \E$}\}, \]
so that
\[ \#\E \gg_{\epsilon} p^{\epsilon/4}/(\log{p})^3 \qquad \text{and}\qquad \min \Qq \ge p^{\epsilon/28}. \]
It is easy to see that $f(n) > 1$ for all positive integers $n$. Thus,
\[ \sum_{n \in \E} 1 \le \sum_{n \in \E} \sum_{\substack{q \mid f(n)\\ q\text{ prime}}} 1. \]
Reversing the order of summation and using our lower bound on $\#\E$, we deduce that
\begin{equation}\label{eq:doublesumlower}  \sum_{q \in \Qq} \sum_{\substack{n \le p^{\epsilon/4} \\ q\mid f(n)}} 1 \gg_{\epsilon} p^{\epsilon/4}/(\log{p})^3 \end{equation}
for large $p$. On the other hand, for each $q \in \Qq$, the number of $n \le p^{\epsilon/4}$ for which $q\mid f(n)$ is at most $3p^{\epsilon/4}/q + O(1)$. Thus,
\begin{align*} \sum_{q \in \Qq} \sum_{\substack{n \le p^{\epsilon/4} \\ q\mid f(n)}} 1 &\le 3p^{\epsilon/4} \sum_{q \in \Qq}\frac{1}{q}  + O(\#\Qq) \\
&\le 3p^{\epsilon/4} \cdot p^{-\epsilon/28} \#\Qq + O(\#\Qq).\end{align*}
If we suppose that $\#\Qq \le p^{\epsilon/29}$, then this contradicts \eqref{eq:doublesumlower} (for large $p$). Hence,
\[ \#\Qq > p^{\epsilon/29}. \]

Take any $q \in \Qq$ with $q$ not dividing $6LM$. This non-divisibility condition  excludes only $O(\log{p})$ values of $q$, and so (for large $p$) there are still at least $p^{\epsilon/30}$ choices of $q$. We have that $q\mid f(n)$ for some $n \le p^{\epsilon/4}$, so that
if we set $m=2^5n+n_0$, then \[ M(m^3-9m) \equiv -L(m^2-1) \pmod{q}. \]
Thus,
\[ (m^3-9m) \equiv -\frac{L}{3M} (3(m^2-1)) \pmod{q}. \]
If $q\mid 3(m^2-1)$, then $m\equiv \pm 1\pmod{q}$, and so $m^3-9m \equiv \pm 8 \not\equiv 0 \pmod{q}$, contradicting the last displayed congruence. Thus, $3(m^2-1)$ is invertible modulo $q$, and
\[ \frac{L}{3M} \equiv -\frac{m^3-9m}{3(m^2-1)} \equiv \frac{(-m)^3 - 9(-m)}{3((-m)^2-1)} \pmod{q}. \]
By Proposition \ref{prop:ZH}, $q$ is a cube modulo $p$.

We will be finished if we show that each $q \in \Qq$ is smaller than $p^{1/2+\epsilon}$. But this is easy: $q$ divides a nonzero integer of the form $f_0(m)$ where $1 \le m \le 2^5 p^{\epsilon/4} + 1$. For every positive integer $m$,
\[ |f_0(m)| \le \max\{|L|,|M|\} (|m^3-9m| + |m^2-1|) \ll p^{\frac12} m^3.  \]
Thus, $|f_0(m)|$ and $q$ are both smaller than $p^{1/2+\epsilon}$ (for large $p$).
\end{proof}
\section{Biquadratic residues: Proof of Theorem \ref{thm:main2}}
For the proof of Theorem \ref{thm:main2}, we replace Proposition \ref{prop:ZH} with the following corollary to the biquadratic reciprocity law. Recall that each prime $p\equiv 1\pmod{4}$ admits a representation $p=L^2+4M^2$, with the integers $L,M$ uniquely determined up to sign.

\begin{prop}\label{prop:ZH2} Let $p$ be a prime with $p \equiv 1\pmod{4}$, and let $L, M$ be integers satisfying $p = L^2+4M^2$. Let $q$ be an odd prime, $q \ne p$.  Then
\begin{multline*} q^{\ast} \text{ is a biquadratic residue mod $p$} \Longleftrightarrow \\ \frac{L}{2M}\equiv \frac{x^4-6x^2+1}{4(x^3-x)} \pmod{q} \quad\text{for some $x\in \Z$}.\end{multline*}
In the case when one of the right-hand denominators is a multiple of $q$, the congruence is considered to hold when both denominators are multiples of $q$. In particular, if $q\mid M$, then $q^{\ast}$ is a biquadratic residue modulo $p$ {\rm (}take $x=0${\rm )}.
\end{prop}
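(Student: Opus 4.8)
The plan is to imitate Z.-H. Sun's derivation of Proposition \ref{prop:ZH}, replacing the Eisenstein integers and cubic reciprocity by the Gaussian integers $\Z[i]$ and the biquadratic reciprocity law. Throughout I would factor $p = \pi\bar\pi$ in $\Z[i]$, where $\pi = L + 2Mi$ has norm $N\pi = L^2 + 4M^2 = p$; after replacing $\pi$ by $-\pi$ if necessary one may assume $\pi$ is primary (this is the unique primary associate of $L+2Mi$, since multiplying by $\pm i$ destroys the odd real part $L$). The first step is the clean reduction
\[ q^{\ast}\text{ is a biquadratic residue mod } p \iff \left(\tfrac{q^{\ast}}{\pi}\right)_{4} = 1, \]
where $\left(\tfrac{\cdot}{\pi}\right)_4$ is the quartic residue symbol. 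Indeed $(q^{\ast})^{(p-1)/4} \equiv \left(\tfrac{q^{\ast}}{\pi}\right)_4 \pmod{\pi}$, and since $(q^{\ast})^{(p-1)/4} - 1$ is a rational integer, $\pi$ divides it exactly when $p = \pi\bar\pi$ does. The point of using $q^{\ast}$ rather than $q$ is precisely that $q^{\ast}\equiv 1\pmod 4$ makes $q^{\ast}$ primary in $\Z[i]$, so that reciprocity applies in the final step.

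The heart of the argument is an algebraic identity recognizing the polynomial hidden in the right-hand congruence as a norm form. Expanding $(x\pm i)^4 = (x^4 - 6x^2 + 1) \pm 4i(x^3 - x)$ and using $\pi = L+2Mi$, a direct computation gives
\[ 2M(x^4 - 6x^2 + 1) - 4L(x^3 - x) = \tfrac{i}{2}\bigl(\bar\pi(x+i)^4 - \pi(x-i)^4\bigr). \]
Since $q$ is odd, the unit $\tfrac{i}{2}$ is invertible, so modulo a prime $\mathfrak q$ of $\Z[i]$ above $q$ the congruence $\frac{L}{2M}\equiv \frac{x^4-6x^2+1}{4(x^3-x)}$ is equivalent to
\[ \frac{\pi}{\bar\pi} \equiv \left(\frac{x+i}{x-i}\right)^{4} \pmod{\mathfrak q}. \]
As $x$ runs over $\F_q$ (excluding the two values where a denominator vanishes), the quantity $\frac{x+i}{x-i}$ runs bijectively over the group $T$ of norm-one elements of $(\Z[i]/\mathfrak q)^{\ast}$ — the standard rational parametrization of the ``circle'' $z\bar z=1$ — and the degenerate inputs are exactly what the denominator convention in the statement is designed to absorb. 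Writing $\alpha = \pi/\bar\pi \in T$, the right-hand condition of the proposition says precisely that $\alpha$ is a fourth power in $T$.

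It remains to identify ``$\alpha$ is a fourth power in $T$'' with $\left(\tfrac{q^{\ast}}{\pi}\right)_4 = 1$. Because $q$ is odd, $4\mid |T|$ in both cases ($|T| = q+1$ if $q$ is inert, $|T| = q-1$ if $q$ splits), so $\alpha\in T^4 \iff \alpha^{|T|/4} = 1$. A short computation with the quartic symbol, carried out separately according to whether $q$ is inert or split in $\Z[i]$, turns this into $\left(\tfrac{\pi}{q^{\ast}}\right)_4 = 1$: in the inert case one uses $\bar\pi = \pi^{q}$ to write $\alpha^{(q+1)/4} = \pi^{-(q^2-1)/4}$, while in the split case $q = \lambda\bar\lambda$ and the condition unwinds to $\left(\tfrac{\pi}{\lambda}\right)_4\left(\tfrac{\pi}{\bar\lambda}\right)_4 = 1$, which is again $\left(\tfrac{\pi}{q^{\ast}}\right)_4 = 1$. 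Finally, biquadratic reciprocity for the primary primes $\pi$ and $q^{\ast}$ gives $\left(\tfrac{\pi}{q^{\ast}}\right)_4 = \left(\tfrac{q^{\ast}}{\pi}\right)_4$, because the reciprocity sign $(-1)^{\frac{p-1}{4}\cdot\frac{q^2-1}{4}}$ is trivial once one notes that $8\mid q^2 - 1$ for every odd $q$. Chaining the three equivalences finishes the proof.

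I expect the main obstacle to be the bookkeeping in this last step: pinning down the primary normalizations of $\pi$ (and of $\lambda$), handling the split and inert primes $q$ in parallel, and matching the degenerate inputs of the parametrization with the ``both denominators divisible by $q$'' convention. In particular one must check that the sign ambiguity in $(L,M)$ is harmless; on the analytic side this is the observation that $\frac{x^4-6x^2+1}{4(x^3-x)}$ is an \emph{odd} function of $x$, and on the arithmetic side it corresponds to replacing $\alpha$ by $\alpha^{-1}$, which leaves membership in the subgroup $T^4$ unchanged.
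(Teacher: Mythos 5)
Your proposal is correct in outline, but you should know that the paper never actually proves this proposition: its ``proof'' is a one-line citation to Sun (compare Theorem 2.2 and Corollary 3.2 of \cite{sun01}), exactly as Proposition \ref{prop:ZH} is imported from Sun's paper on cubic residues. So what you have written is a self-contained reconstruction --- in the same spirit as Sun's own treatment, via quartic residue symbols of primary Gaussian integers --- of a statement the authors use as a black box; what your route buys is independence from the reference, at the cost of the reciprocity bookkeeping you describe. The skeleton checks out. Your central identity is exact: with $A = x^4-6x^2+1$, $B = 4(x^3-x)$, and $\pi = L+2Mi$, one computes $\bar\pi(x+i)^4 - \pi(x-i)^4 = 2i\,(LB-2MA)$, so the displayed congruence in cross-multiplied form $q \mid 2MA - LB$ becomes $\mathfrak{q} \mid \bar\pi(x+i)^4 - \pi(x-i)^4$; moreover cross-multiplication really is equivalent to the statement's denominator convention, since $q \mid B$ forces $x \equiv 0, \pm 1 \pmod{q}$, whence $A \equiv 1$ or $-4$, a unit modulo the odd prime $q$. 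The torus orders $|T| = q+1$ (inert) and $q-1$ (split), the criterion $\alpha \in T^4 \iff \alpha^{|T|/4}=1$, the unwinding to $\left(\frac{\pi}{q^{\ast}}\right)_4 = 1$ in both cases, and the triviality of the reciprocity sign because $8 \mid q^2-1$ are all correct. Two pieces of bookkeeping to repair in a final write-up: (a) in the split case ``norm-one elements of $(\Z[i]/\mathfrak{q})^{\ast}$'' is not intrinsically defined, since conjugation does not descend to $\Z[i]/\mathfrak{q}$; the correct assertion is that $x \mapsto (x+i)/(x-i)$ maps $\F_q$, minus the two roots of $x^2+1$ when they exist, bijectively onto $(\Z[i]/\mathfrak{q})^{\ast} \setminus \{1\}$ in the split case and onto $T\setminus\{1\}$ in the inert case, the omitted value $1$ being harmless because $1 = (-1)^4$; (b) the inputs excluded from the parametrization (those $x$ with $x^2 \equiv -1 \pmod q$, which exist only for split $q$, and at which the congruence fails automatically because $\mathfrak{q} \nmid \bar\pi$) are \emph{not} the inputs governed by the denominator convention ($x \equiv 0, \pm 1$, which map to $t \in \{-1, \pm i\}$ and correctly detect $q \mid M$); your sketch conflates these two kinds of degeneracy, though both in fact work out.
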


\noindent Proposition \ref{prop:ZH2} is again due to Sun (compare with Theorem 2.2 and Corollary 3.2 of \cite{sun01}).

\begin{proof}[Proof of Theorem \ref{thm:main2}] The proof closely parallels that of Theorem \ref{thm:main1}. This time, we let $L,M$ be positive integers with $L^2+4M^2=p$, and we put
\[ f_0(x) = M(x^4-6x^2+1)+ 2L(x^3-x) \in \Z[x].	\]
If $2\mid M$, then $q=2$ is clearly a fixed prime divisor of $f_0$. Noting that $n^3-n$ is always a multiple of $3$, we see that when $3\mid M$ the prime $q=3$ is also a fixed divisor of $f_0$. Now suppose that $q \ge 5$. Since $\gcd(L,M)^2 \mid L^2 + 4M^2 = p$, it is clear that $\gcd(L,M)=1$. The constant term of $f_0$ is $M$ while the $x^3$-coefficient is $2L$; since $q\ge 5$ and $\gcd(L,M)=1$, at least one of $M$ and $2L$ is not a multiple of $q$. Hence, $f_0$ does not reduce to the zero polynomial modulo $q$, and so $f_0$ has at most four roots modulo $q$.

As before, fixed prime divisors can be avoided by restricting the the allowed substitutions for $x$ to a suitable arithmetic progression. Let $2^{e}$ be the highest power of $2$ dividing $f_0(0)=M$ and let $3^{e'}$ be the highest power of $3$ dividing $M$. If $e\ge 3$, then $2^2 \parallel f_0(2)= -7M + 12L$ (since in that case, $2\nmid L$, and so $2^2\parallel 12L$). Similarly, if $e' \ge 2$, then $3\parallel f_0(2)$. Set
\[ m =
\begin{cases}
0 &\text{if $2^3 \nmid M$},\\
2 &\text{if $2^3 \mid M$},
\end{cases}
\quad\text{and}\quad
m' =
\begin{cases}
0 &\text{if $3^2 \nmid M$},\\
2 &\text{if $3^2 \mid M$}.
\end{cases}
\]
Let $n_0$ be a positive integer solution to the simultaneous congruences
\[ n_0 \equiv m \pmod{2^3}, \quad n_0\equiv m' \pmod{3^2} \]
with $n_0 \le 2^3 \cdot 3^2$.
If $v, v'$ are defined by the conditions that $2^{v} \parallel f_0(n_0)$ and $3^{v'} \parallel f_0(n_0)$, we have $v \in \{0,1,2\}$ and $v' \in \{0,1\}$. Put
\[ f(x) = \frac{1}{2^{v} 3^{v'}} f_0(2^3 3^2 x + n_0). \]
Then $f(x) \in \Z[x]$ and all the values assumed by $f$ are coprime to $6$. Since $2^3 3^2$ is invertible modulo every prime $q\ge 5$, our earlier discussion of $f_0$ implies that $f$ has at most four roots modulo all these $q$.

Applying the sieve in the same manner as in the proof of Theorem \ref{thm:main1} shows that the number of primes $q$ dividing $f(n)$ for some $n \le p^{\epsilon/5}$ is at least $p^{\epsilon/46}$, for all large $p$. (We use the entry for $\beta_4$ in Table 17.1 of \cite{DH08} this time, since now $f$ can have up to four roots modulo a prime number $q$. Note that $\beta_4 < 9.1$, and $5\cdot 9.1<46$.) An argument entirely analogous to one seen above, now using Proposition \ref{prop:ZH2} in place of Proposition \ref{prop:ZH}, shows that $q^{\ast}$ is a biquadratic residue modulo $p$ for all such $q$ not dividing $2LM$. Since this last condition eliminates only $O(\log{p})$ primes, the number of remaining values of $q$ is at least $p^{\epsilon/50}$ (for large $p$). Finally, it is easy to see that $|f(n)| < p^{1/2+\epsilon}$ for all $n \le p^{\epsilon/5}$, so that each $q < p^{1/2+\epsilon}$.
\end{proof}

\section{Small quadratic residues redux: Proof of Theorem \ref{thm:main3}}

\begin{proof}[Proof of Theorem \ref{thm:main3}] Suppose first that $p\equiv 1\pmod{4}$. Let $r = \lfloor\sqrt{p}\rfloor$, and let $f(x) = (x+r)^2-p$. Then $f$ has no fixed prime divisor, and $f$ has at most two roots modulo every prime.  Applying the DHR sieve in a now familiar way, we find that
\[\#\{\text{odd primes }q: q \mid f(n) \text{ for some $n \le p^{0.95\epsilon}$}\} > p^{2\epsilon/9} \]
for sufficiently large $p$. (The relevant sifting limit this time, $\beta_2$, is $\approx 4.27$, and $0.95/4.27 > 2/9$.) For $n \le p^{0.95\epsilon}$, the integer $f(n)$ is positive and smaller than $p^{1/2+\epsilon}$; thus, each prime $q$ counted above is smaller than $p^{1/2+\epsilon}$. Moreover, for any of these $q$, the shape of $f$ makes it obvious that $p$ is a square modulo $q$. By quadratic reciprocity, $q$ is a square modulo $p$. This completes the proof of the theorem, in slightly stronger form, when $p\equiv 1\pmod{4}$.

We have to work harder when $p\equiv 3\pmod{4}$. Consider the reduced positive definite binary quadratic forms $ax^2+bxy+cy^2$ of discriminant $p^{\ast}=-p$;  note that such forms are necessarily primitive.  Let $h = h(-p)$ be the corresponding class number. A simple counting argument shows that (at least) one of our $h$ forms has $a \gg h/\log(2h)$. To see this, note that $a$ determines $b$ in $O(d(a))$ ways, since $b$ satisfies $b^2\equiv -p\pmod{a}$ and $|b| \le a$. Moreover, $c$ is determined by $a$ and $b$ via $b^2-4ac=-p$. Hence, if $A$ is the largest value of $a$ above, then
\[ h \ll \sum_{m \le A} d(m) \ll A \log{2A}; \]
consequently, $A \gg h/\log(2h)$, as claimed.

By Siegel's theorem, we have $h > p^{1/2-\epsilon/3}$ for all large $p$. Hence, one of the above forms has $a > p^{1/2-\epsilon/2}$. Since $|b| \le a \le c$,
\[ ac = \frac{b^2+p}{4} \le \frac{ac+p}{4}, \]
so that
\[ ac \le \frac{p}{3}. \]
Thus,
\[ c \le \frac{p}{3a} < p^{1/2+\epsilon/2}, \]
and (using $a^2 \le ac$)
\[ a < p^{1/2}. \]

The rest of the argument follows the usual lines, with $f_0(x) = ax^2 + bx + c$. Since $\gcd(a,b,c)=1$, the reduction of $f_0$ is nonzero modulo every prime $q$, and so $f_0$ has at most two roots modulo $q$. In particular, $q=2$ is the only possible fixed prime divisor. Notice that if $2^2 \mid c=f_0(0)$ and $2^2 \mid 4a+2b+c = f_0(2)$, then $2 \mid b$; since $\gcd(a,b,c)=1$, this forces $a$ to be odd, so that $a+b+c=f_0(1)$ is also odd. So we can choose an $n_0 \in \{0,1,2\}$ for which $2^2\nmid f_0(n_0)$. Define $e$ by the condition that $2^e \parallel f_0(n_0)$, so that $e=0$ or $1$, and let
\[ f(x) = \frac{1}{2^e} f_0(2^2 x + n_0). \]
Then $f(x)\in\Z[x]$, $f$ assumes only odd values, and $f$ has at most two roots modulo every prime. Application of the sieve shows that
\[ \#\{\text{odd primes }q: q \mid f(n)\text{ for some $n \le p^{\epsilon/5}$}\} > p^{\epsilon/25} \]
for large $p$. (We use the crude estimate $5\cdot 4.27 < 25$.) Each of these $q$ divides a nonzero integer $f_0(m)$ for some positive integer $m < p^{0.21\epsilon}$ (say). Thus,
\[ q \le |f_0(m)| \le a m^2 + |b| m + c \le c(m^2 + m + 1) \le p^{1/2+0.5\epsilon} \cdot p^{0.43\epsilon} < p^{1/2+\epsilon}. \]
Moreover, since the discriminant of $f_0$ is $p^{\ast}$, we may conclude that $p^{\ast}$ is a square modulo $q$. By quadratic reciprocity, $q$ is a square modulo $p$. \end{proof}

\begin{rmk} Our Theorems 1 and 2 are effective in the technical sense; given $\epsilon > 0$, there is no theoretical obstacle to computing the value of $p_0(\epsilon)$. The same is true for Theorem 3 in those cases when $p\equiv 1\pmod{4}$; however, when $p\equiv 3\pmod{4}$, the invocation of Siegel's theorem means that we have no way of estimating the required lower bound on $p$. It seems interesting to note that for the simpler problem of counting prime quadratic residues smaller than $p$ (the specific case $\epsilon=1/2$ of Theorem 3), effectivity is easily restored. One simply applies our sieve argument to $f_0(x) = x^2+x+\frac{1-p^{\ast}}{4}$. In this way, one can show that for all primes $p$ larger than an effectively computable absolute constant, there are more than $p^{1/9}$ prime quadratic residues $q < p$. Here $9$ could be replaced with any number larger than $2\cdot 4.27$. (In addition to being effective, the exponent $1/9$ is better --- i.e., larger --- than the one that comes directly out of the proof of Theorem 3.)
\end{rmk}
	
\section*{Acknowledgements} Our interest in this area was sparked by a \texttt{MathOverflow} posting of ``GH from MO'' \cite{52393}. We thank GH for the inspiration.

\providecommand{\bysame}{\leavevmode\hbox to3em{\hrulefill}\thinspace}
\providecommand{\MR}{\relax\ifhmode\unskip\space\fi MR }
\providecommand{\MRhref}[2]{%
  \href{http://www.ams.org/mathscinet-getitem?mr=#1}{#2}
}
\providecommand{\href}[2]{#2}

\end{document}